\theoremstyle{plain}
\newtheorem{theorem}{Theorem}
\newtheorem{lem}{Lemma}
\newtheorem{defi}{Definition}
\newcommand\arc{\operatorname{arc}}
\newcommand\conv{\operatorname{conv}}
\newcommand{\C}{\mathbb{C}}
\newcommand{\N}{\mathbb{N}}
\newcommand{\Q}{\mathbb{Q}}
\newcommand{\R}{\mathbb{R}}
\begin{document}

\title[The Aleksandrov problem in two dimensions]{A contribution to the Aleksandrov conservative distance problem in two dimensions}

\author{Gy\"orgy P\'al Geh\'er}
\address{Bolyai Institute\\
University of Szeged\\
H-6720 Szeged, Aradi v\'ertan\'uk tere 1, Hungary}
\address{MTA-DE "Lend\"ulet" Functional Analysis Research Group, Institute of Mathematics\\
University of Debrecen\\
H-4010 Debrecen, P.O.~Box 12, Hungary}
\email{gehergy@math.u-szeged.hu; gehergyuri@gmail.com}
\urladdr{\url{http://www.math.u-szeged.hu/~gehergy/}}

\begin{abstract}
Let $E$ be a two-dimensional real normed space. In this paper we show that if the unit circle of $E$ does not contain any line segment such that the distance between its endpoints is greater than 1, then every transformation $\phi\colon E\to E$ which preserves the unit distance is automatically an affine isometry.  In particular, this condition is satisfied when the norm is strictly convex.
\end{abstract}

\subjclass[2010]{Primary: 46B04, 46B20, 52A10. Secondary: 51M05, 52C25.}

\keywords{Aleksandrov conservative distance problem, Beckman-Quarles type space, isometry, unit distance preserving mapping.}

\maketitle

%---------------------------------------------------------------------------------------------------------------------------------------------------------------------

\section{Introduction}

In 1953 F. S. Beckman and D. A. Quarles characterized isometries of $n$-dimensional Euclidean spaces under a surprisingly mild condition when $n\geq 2$ (see \cite{BeQu} or \cite{Be,Ju} for alternative proofs). Namely, they managed to show that every transformation $\phi\colon\R^n\to\R^n$ which preservers unit Euclidean distance in one direction is an (affine) isometry. They also noted that on $\R$ or on an infinite dimensional, real Hilbert space the same conclusion fails.

Many mathematicians have been trying to generalize this beautiful theorem. The problem of characterizing those finite dimensional real normed spaces $E$ such that every transformation $\phi\colon E\to E$ which preserves the unit distance in one direction is an isometry was raised, in this general form, by A. D. Aleksandrov and hence it is called the Aleksandrov conservative distance problem (see \cite{Al}). In the literature these spaces are also called Beckman-Quarles type spaces. As far as we know, the original version of Aleksandrov problem was solved only for a few concrete normed spaces (see \cite{We} concerning $p$-norms, and \cite{Li} where the norm is not strictly convex), all of them are two-dimensional. Some general results are known for modified versions, for instance in \cite{BeBe} W. Benz and H. Berens investigated the case when the transformation preservers distance 1 and $n$ for some $n\in\N, n>1$. We also mention the paper \cite{RaSe} of T. M. Rassias and P. \v Semrl where they assumed that $\phi$ is onto and it preserves distance 1 in both directions. They showed that in this case $\phi$ is not very far from being an isometry. Several other results are known which are connected to the Aleksandrov problem. The reader can find a number of them in the References.

The original version remained unsolved even for the very special case when $\dim E = 2$ and the norm is strictly convex. Here we present a unified approach which solves the Aleksandrov problem in two dimensions for a much larger class of norms, which we will call URTC-norms. Let us point out that the naive conjecture that every at least two but finite dimensional normed space is a Beckman-Quarles type space is false. However, as far as we know, counterexamples are only known in the simple case when the unit ball of the norm is a linear image of a cube (see \cite{Ra}).

%---------------------------------------------------------------------------------------------------------------------------------------------------------------------

\section{Auxiliary definitions and statement of the main result}

Since we will consider only two-dimensional normed spaces over $\R$, we can investigate $\R^2$ endowed with a norm $\|\cdot\|$. We say that the norm is strictly convex, if its sphere $S$ does not contain any non-degenerated line segment. If three points $a,b,c\in\R^2$ satisfy $d = \|a-b\| = \|b-c\| = \|c-a\|$ for some $d>0$, then these points are said to be in a regular $d$-position. We introduce the following notion.

\begin{defi}
We call $\|\cdot\|$ a URTC-norm (unique regular triangle constructibility) if for every $a,b\in\R^2, \|a-b\| = 1$ the equation system 
\begin{equation}\label{URTCdef_eq}
\left\{\begin{matrix}
\|a-x\| = 1\\
\|b-x\| = 1
\end{matrix}\right.
\end{equation} is satisfied exactly for two points $x \in \R^2$.
\end{defi}

Since the function $f(x) := \|b-x\|$ is continuous on $a+S$, $f(b) =0$ and $f(2a-b)=2$, the existence of such an $x$ which fulfilles \eqref{URTCdef_eq} is trivial. Obviously, if $x$ satisfies \eqref{URTCdef_eq}, then $a+b-x \neq x$ fulfilles it as well. 

By translation, we may have assumed that $a = 0$, and by multiplying with a non-zero scalar, we may have replaced 1 by any $d > 0$ in Definition 1. We note that for the $\ell^\infty$ norm, one can find two points $a,b\in\R^2$ with $\|a-b\| = 1$ such that \eqref{URTCdef_eq} holds for infinitely many points $x\in\R^2$. We will provide a useful characterization of URTC-norms in Lemma \ref{URTC_lem}.

Our main theorem, which reads as follows and will be proven in Section \ref{proof_sec}, provides an affirmative answer for the Aleksandrov conservative distance problem for URTC-noms.

\begin{theorem}\label{main_thm}
Let $\|\cdot\|$ be a URTC-norm on $\R^2$, and let us consider an arbitrary transformation $\phi\colon\R^2\to\R^2$ such that
\[
x,y\in \R^2,\; \|x-y\| = 1 \quad \Longrightarrow \quad \|\phi(x)-\phi(y)\| = 1.
\]
Then $\phi$ is an affine isometry.
\end{theorem}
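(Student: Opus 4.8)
The plan is to turn the hypothesis into a rigidity statement and then propagate it. The first, elementary, observation is that $\phi$ preserves regular $1$-positions: if $\|a-b\|=\|b-c\|=\|c-a\|=1$, then the three image distances are again all equal to $1$. Combined with the URTC property this yields the basic rigidity I want to exploit: every unit edge $\{u,v\}$ (that is, $\|u-v\|=1$) admits, by \eqref{URTCdef_eq}, exactly two apices completing it to a regular $1$-position, and by the remark following Definition~1 these two apices are reflections of one another in the midpoint $\tfrac{1}{2}(u+v)$. Thus once the images of the two endpoints of a unit edge are fixed, $\phi$ is left with a single binary (reflection) choice for the apex, and the whole proof amounts to forcing these choices to be globally coherent.

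Next I would fix a base triangle $0,b,c$ with $\|b\|=\|c\|=\|b-c\|=1$, let $A$ be the unique affine map with $A(0)=\phi(0)$, $A(b)=\phi(b)$, $A(c)=\phi(c)$, and consider the triangular lattice $\Lambda=\mathbb{Z}b+\mathbb{Z}c$, in which the six unit neighbours of a point $p$ are $p\pm b$, $p\pm c$, $p\pm(b-c)$. I would prove $\phi|_\Lambda=A|_\Lambda$ by induction, passing from a triangle to an edge-adjacent one: if $\phi$ already agrees with $A$ at two vertices of a new triangle, the two candidate apices are the images under $A$ of the true apex and of its midpoint-reflection, and one checks that the affine value is the ``far'' apex, while the alternative would fold $\phi$ back onto an already used value. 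The delicate point --- the first genuine obstacle --- is to rule out this collapsing choice and, more generally, to show that the reflection choices carry no holonomy around a lattice vertex: the star of six triangles at a vertex $p$ closes up into a hexagon of unit steps inscribed in the circle $\phi(p)+S$, and an incoherent choice would force this hexagon to degenerate in a way that the shape of $S$, as constrained by Lemma~\ref{URTC_lem}, forbids. This is exactly where the URTC hypothesis is indispensable, and where the analogous statement breaks down for non-URTC norms such as $\ell^\infty$.

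The second, and I expect principal, obstacle is to pass from the discrete lattice to all of $\R^2$ with no continuity assumption on $\phi$. A single lattice is far from dense, and a generic point has no lattice neighbour at distance exactly $1$, so the rigidity cannot be propagated for free; instead I would enlarge the rigid skeleton by attaching further unit regular triangles and, more generally, rigid finite unit-distance configurations whose already-controlled vertices pin a new vertex through the essential uniqueness in \eqref{URTCdef_eq}. The goal is to show that the set of points at which $\phi$ is rigidly determined is dense, and in fact that $\phi$ is forced to preserve a rich family of distances. Making this precise --- that is, showing that the rigid unit-distance closure of a single regular triangle fills up $\R^2$, with the URTC condition again ruling out the degenerate intersections that would stall the construction --- is the technical heart of the argument.

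Finally, suppose the previous steps identify $\phi$ with a single affine map $A(x)=Lx+A(0)$ on all of $\R^2$. Then the hypothesis says $\|Lu\|=1$ whenever $u\in S$, and homogeneity of the norm upgrades this to $\|Lv\|=\|v\|$ for every $v$: in particular $L$ has trivial kernel, hence is invertible, and is a linear isometry. Therefore $A$, and with it $\phi$, is an affine isometry, as claimed. Of the four stages, I expect the first (local-to-global coherence on the lattice) and especially the second (discrete-to-dense globalization without continuity) to require the real work, while the reduction to regular triangles and the concluding homogeneity argument are routine.
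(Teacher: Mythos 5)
Your outline has two genuine gaps, and they sit exactly at the two places you yourself flag as ``delicate'' and ``the technical heart''. First, the lattice induction. When $\phi$ agrees with $A$ at the two endpoints $u,v$ of an edge, the two candidates for the image of the new apex $w$ are indeed $A(w)$ and $A(u+v-w)$, but choosing the latter is \emph{not} a contradiction on the grounds that it ``folds $\phi$ back onto an already used value'': $\phi$ is not assumed injective, and nothing in the hypothesis forbids $\phi(w)=\phi(u+v-w)$, since $\|w-(u+v-w)\|=\|2w-u-v\|$ is in general neither $1$ nor otherwise controlled. Your proposed fix --- that an incoherent choice around the star of six triangles at a vertex would make the inscribed unit hexagon ``degenerate in a way that $S$ forbids'' --- is not an argument; in fact for every URTC norm one has $f^3=-\mathrm{id}$ on $S$, so closed unit hexagons inscribed in $S$ exist in abundance, and you would have to rule out every mixed forward/backward closing pattern. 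The paper resolves precisely this point with a seven-point configuration (the $d$-probe of Lemma~\ref{probe_lem}): a second regular triangle $c_1,c_2,c_3$ attached by the single extra edge $\|b_3-c_3\|=d$ is what excludes the collapse $b_3=b_0$ and forces $\phi(b_1+b_2-b_0)=\phi(b_1)+\phi(b_2)-\phi(b_0)$. Some such auxiliary rigid gadget is indispensable here, and your sketch does not supply one.

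Second, the globalization. Even granting $\phi|_\Lambda=A|_\Lambda$, your plan to show that ``the rigid unit-distance closure of a triangle is dense'' would not finish the proof: with no continuity assumption on $\phi$, agreement with $A$ on a dense set says nothing about the remaining points, and your concluding step explicitly assumes $\phi=A$ on \emph{all} of $\R^2$. The missing mechanism is the one the paper actually uses: from the parallelogram identity on regular triangles one gets preservation of the distances $nd$ (with collinearity) and then $d/n$, hence of all rational distances; then for arbitrary $a,b$ one picks rationals $p,q$ with $q$ small and $p-q<\|a-b\|<p+q$, uses Lemma~\ref{basic_lem}(ii) to produce $c$ with $\|a-c\|=p$, $\|b-c\|=q$, and squeezes $p-q\le\|\phi(a)-\phi(b)\|\le p+q$ by the triangle inequality. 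This pointwise squeeze is what replaces continuity, and nothing in your proposal plays its role. Your first and last paragraphs (reduction to regular triangles; an affine unit-circle-preserving map is an isometry) are fine, but the two central steps are exactly where the content of the theorem lies and they are not established.
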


We will need several lemmas before proving Theorem \ref{main_thm}. We note that Theorem \ref{main_thm} can be considered as a Mazur-Ulam type result in two dimensions (see \cite{MaUl,FlJa}). Let us point out that quite the same proof works for the case if we consider two different URTC-norms on the initial and final spaces. However, dealing with the above version makes notations much simpler. Furthermore, by affinity, the modified version of our main theorem says in many cases (in fact when the unit circles of these norms are not linear images of each other) that no transformation $\phi$ exists which preserves the unit distance.

%---------------------------------------------------------------------------------------------------------------------------------------------------------------------

\section{Proof of the main result}\label{proof_sec}

We begin with a characterization of URTC-norms. The symbols $[a,b]$ and $\ell(a,b)$ will denote the line segment $\{a+t(b-a)\colon 0\leq t \leq 1\}$ and the line $\{a+t(b-a)\colon t\in\R\}$, respectively.

\begin{lem}\label{URTC_lem}
The following conditions are equivalent for any norm $\|\cdot\|$ on $\R^2$:
\begin{itemize}
\item[(i)] $\|\cdot\|$ is not a URTC-norm,
\item[(ii)] two points $c,d\in S, \|c-d\|>1$ exist such that $[c,d]\subseteq S$.
\end{itemize}
In particular, every strictly convex norm is a URTC-norm.
\end{lem}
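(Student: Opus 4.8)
The plan is to prove the equivalence by establishing the two implications separately, and then to read off the final ``in particular'' assertion from the observation that a norm is strictly convex exactly when $S$ contains no nondegenerate segment, so that condition (ii) fails for every strictly convex norm; by the equivalence this makes every strictly convex norm a URTC-norm.

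First I would dispatch the easy implication (ii)$\Rightarrow$(i) by an explicit construction. Given $c,d\in S$ with $[c,d]\subseteq S$ and $\|c-d\|>1$, I set $b:=\tfrac{1}{\|d-c\|}(d-c)$, a unit vector parallel to the segment. Both $[c,d]$ and its translate $b+[c,d]$ lie on the line $\ell(c,d)$, and since $\|b\|=1<\|d-c\|$ they overlap in a nondegenerate subsegment $J$. Every $x\in J$ satisfies $x\in[c,d]\subseteq S$ and $x-b\in[c,d]\subseteq S$, hence $\|x\|=1$ and $\|x-b\|=1$; taking $a=0$ (so $\|a-b\|=1$) this exhibits infinitely many solutions of \eqref{URTCdef_eq}, whence $\|\cdot\|$ is not a URTC-norm.

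For the reverse implication (i)$\Rightarrow$(ii), which I expect to be the technical heart, I would first normalise by a translation so that $a=0$; then the solution set is $S\cap(S+b)$ with $\|b\|=1$, and $x$ is a solution precisely when $x$ and $x-b$ both lie on $S$. Writing $u:=b$, I decompose the plane into lines parallel to $u$ indexed by a transverse coordinate $t$; the line $\ell_t$ meets the closed unit ball $B$ in a chord $[L(t),U(t)]$, and I record the chord-length function $\Lambda(t):=\|U(t)-L(t)\|$. The two facts I rely on are that $\Lambda$ is concave (it is the difference of the concave upper and convex lower boundary heights measured along $u$) and that $\max\Lambda\ge 2>1=\|b\|$, since the chord through the origin runs from $-u$ to $u$. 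Concavity together with $\|b\|$ lying strictly below the maximum forces the level set $\{t:\Lambda(t)=\|b\|\}$ to have at most two points. Now for fixed $t$ the set $S\cap\ell_t$ is either a segment or consists of at most two points: in the latter case a solution on $\ell_t$ exists only when $\Lambda(t)=\|b\|$, and then exactly one; in the former case (an edge of $S$ parallel to $u$) the solutions on $\ell_t$ fill a whole segment as soon as $\Lambda(t)>\|b\|$. Hence every isolated solution lies over one of the at most two positions with $\Lambda(t)=\|b\|$, so there are at most two of them, and more than two solutions can occur only if some $\ell_t$ meets $S$ in a segment of length $\Lambda(t)>\|b\|=1$. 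That segment is the desired $[c,d]\subseteq S$ with $\|c-d\|>1$, establishing (ii).

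The main obstacle is precisely the final step of (i)$\Rightarrow$(ii): it is not enough to produce some flat piece of $S$; one must force a segment of length strictly greater than $1$. This is exactly where the concavity of $\Lambda$ (capping the isolated solutions at two) and the elementary bound $\max\Lambda\ge2$ are indispensable, for without controlling the length one would recover only strict convexity as a sufficient condition. I would also carry out the routine bookkeeping — that edges of $S$ parallel to $u$ arise only at the extreme transverse positions, and that a line meeting $\partial B$ in at most two points contributes at most one solution — but these checks are straightforward once the line-by-line analysis above is in place.
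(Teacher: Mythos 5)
Your proof is correct, but for the hard implication (i)$\Rightarrow$(ii) it follows a genuinely different route from the paper. The easy direction (ii)$\Rightarrow$(i) is the same construction in both (take $b=\frac{1}{\|d-c\|}(d-c)$ and observe that the whole subsegment $[c+b,d]$ consists of solutions of \eqref{URTCdef_eq}). For the converse, the paper argues directly with three solutions $x,y$ of the system for some $b\in S$: it places two of them on the same open side of $\ell(0,b)$, then by an orientation and convex-hull containment case analysis (e.g.\ showing $x$ would otherwise lie in the interior of $\conv(0,y,x-b)$, impossible since all three are in $S$) it forces $x\in\ell(y,y-b)$, and finally uses the fact that three distinct collinear points $y-b,y,x$ of $S$ put the whole segment $[y-b,x]$, of length $1+\|x-y\|>1$, inside $S$. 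You instead slice the plane into lines parallel to $b$, invoke concavity of the chord-length function $\Lambda$ together with $\max\Lambda\geq 2$ to cap the number of values of $t$ with $\Lambda(t)=1$ at two, and observe that each such line carries at most one solution while any further solutions must fill a flat edge of $S$ parallel to $b$ of length exceeding $1$. Your deferred checks are genuinely routine: a line meets $S$ in its full chord exactly when it is a supporting line (otherwise in at most two points), and the per-line solution count follows since $b$ is parallel to the slicing direction. The trade-off: the paper's argument is entirely elementary, needing no auxiliary function, but its case analysis is delicate and somewhat ad hoc; your argument leans on the standard (but not proved in the paper) concavity of parallel chord lengths of a convex body, and in exchange gives a cleaner global picture — the solution set $S\cap(S+b)$ is either exactly two points or contains a segment on a supporting line parallel to $b$ — with the bound "at most two isolated solutions" falling out of a one-line concavity argument rather than a position-by-position analysis.
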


\begin{proof}
(ii)$\Longrightarrow$(i): Set $a=0$ and $b = \frac{1}{\|d-c\|}(d-c) \in S$. Then every $x\in[c+b,d]$ satisfies \eqref{URTCdef_eq}. Therefore the norm cannot be URTC.

(i)$\Longrightarrow$(ii): Since the norm does not have the URTC property, there exists a point $b\in S$ such that at least three different solutions can be given which satisfy \eqref{URTCdef_eq} with $a = 0$. Clearly, none of them can lie on $\ell(0,b)$. Therefore at least two of them, $x$ and $y$, lies on the same open side of $\ell(0,b)$. First, we show that  $x\in\ell(y,y-b)$. Assuming the contrary, we can suppose, without loss of generality, that $x$ lies  between $\ell(0,b)$ and $\ell(y,y-b)$. Clearly $\conv(b,y,y-b,-b)\cap S \subseteq [b,y]\cup[y,y-b]\cup[y-b,-b]$, but since $\|x-b\| = \|y-b\|$ holds, we get $x\notin[b,y]$. On the one hand, if $x$ lies on the opposite closed side of $\ell(-b,y-b)$ than $b$, then $x$ has to be in the interior of $\conv(0,y,x-b)$. Since $y,x,x-b\in S$, this is impossible. On the other hand, if $x$ lies on the opposite side of $\ell(b,y)$ than $0$, then $x-b$ has to be in the interior of $\conv(0,x,y-b)$ which is again a contradiction. Therefore, indeed $x\in\ell(y,y-b)$ is satisfied.

Now, we may suppose that $x-y = \|x-y\|b$. Since $y-b,y,x$ are distinct collinear points of the unit sphere, we easily obtain that $[y-b,x]\subseteq S$ and $\|(y-b)-x\|>1$, which completes the proof of this part.
\end{proof}

The shorter closed and open arcs of $S$ between two non-antipodal points $b_0,b_1$ will be denoted by $\arc(b_0,b_1)$ and $\arc^{\circ}(b_0,b_1)$, respectively. We provide some basic properties of URTC-norms in the following lemma.

\begin{lem}\label{basic_lem}
\begin{itemize}
\item[(i)] Let $\|\cdot\|$ be a URTC-norm. If $a,b,c,d\in S$ such that $\|a-d\| = \|b-c\| = 1$ and $(a-d,a,b,c,d,d-a)$ is positively oriented, then we have $a=b$ and $c=d$.
\item[(ii)] Let $\|\cdot\|$ be an arbitrary norm. Suppose that we have $a,b\in\R^2,\; 0 < \gamma := \|a-b\|$ and $0\leq \alpha,\beta$. If $|\beta-\gamma|\leq\alpha\leq\beta+\gamma$ is satisfied, then there exists a point $c\in\R^2$ which fulfilles $\|a-c\| = \beta$ and $\|b-c\| = \alpha$.
\end{itemize}
\end{lem}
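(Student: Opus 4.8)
For (ii), the plan is to run an intermediate value argument on a circle of the appropriate radius. If $\beta=0$ the hypothesis forces $\alpha=\gamma$ and $c=a$ works, so I may assume $\beta>0$ and consider the circle $a+\beta S$, which is the image of the connected compact set $S$ under the homeomorphism $x\mapsto a+\beta x$. The function $c\mapsto\|b-c\|$ is continuous on it, hence its range is a compact interval, and I would pin down its endpoints explicitly: the point $c_-=a+\frac{\beta}{\gamma}(b-a)$ lies on $a+\beta S$ and satisfies $\|b-c_-\|=|\gamma-\beta|$, while $c_+=a+\frac{\beta}{\gamma}(a-b)$ gives $\|b-c_+\|=\gamma+\beta$. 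Combined with the two-sided triangle inequality $|\gamma-\beta|\le\|b-c\|\le\gamma+\beta$, valid for every $c\in a+\beta S$, this identifies the range as exactly $[\,|\gamma-\beta|,\gamma+\beta\,]$. Since $\alpha$ lies in this interval by hypothesis, some $c$ on the circle attains $\|b-c\|=\alpha$, and by construction $\|a-c\|=\beta$.

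For (i), the first step is to observe that all six listed points actually lie on $S$: indeed $\|a-d\|=1$ means $a-d\in S$, whence $d-a=-(a-d)\in S$ too, so $a-d$ and $d-a$ are antipodal and the segment $[a-d,d-a]$ is a diameter through $0$. The assumed positive orientation of $(a-d,a,b,c,d,d-a)$ therefore places $a,b,c,d$, in this cyclic order, on the open semicircle lying on one side of this diameter. Consequently $[b,c]$ is a unit chord nested inside the unit chord $[a,d]$, with $\arc(a,d)$ through $b,c$ a genuinely shorter arc (this is exactly what the antipodal framing guarantees), and the whole statement reduces to the claim that two nested unit chords on a semicircle must coincide, i.e. $a=b$ and $c=d$.

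To prove this reduced claim I would argue by contradiction, assuming the two chords are distinct and hence, without loss of generality, $a\neq b$ (the case $c\neq d$ being symmetric under reversing orientation and interchanging the two chords), so that $a,b,c,d$ are four points in strict cyclic order. The mechanism is to exploit Lemma \ref{URTC_lem}: a URTC norm contains no segment of $S$ whose endpoints are farther than $1$ apart. Mirroring the convexity bookkeeping in the (i)$\Rightarrow$(ii) part of Lemma \ref{URTC_lem}, I would compare the two unit chords sharing the common apex $0$ — the triangles $\conv(0,a,d)$ and $\conv(0,b,c)$ are both regular $1$-triangles — and invoke the basic fact that no point of $S$ can lie in the interior of the convex hull of other points of $S$. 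Sliding $b$ strictly past $a$ along the semicircle while maintaining $\|b-c\|=1$ with $c$ preceding $d$ should force either a point of $S$ into such a forbidden interior position or a nondegenerate segment of $S$ of length exceeding $1$; either outcome contradicts the hypothesis through Lemma \ref{URTC_lem}. This convexity step is the main obstacle: making the ``nesting produces an over-long boundary segment'' phenomenon precise, and carefully handling the degenerate sub-cases where some of the four points coincide, is where the genuine work lies, the orientation reduction and part (ii) being routine.
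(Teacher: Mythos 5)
Your part (ii) is correct and essentially identical to the paper's argument: the same intermediate-value reasoning on the circle $a+\beta S$, with the same two witness points $a\pm\frac{\beta}{\gamma}(b-a)$ realizing the extreme values $|\beta-\gamma|$ and $\beta+\gamma$; your separate treatment of $\beta=0$ is harmless. Part (i), however, has a genuine gap, and it is exactly the one you flag yourself: after the (correct) reduction to ``two nested unit chords on the same half of $S$ must coincide,'' your proof of that claim consists of the assertion that sliding $b$ past $a$ ``should force either a point of $S$ into a forbidden interior position or a nondegenerate segment of $S$ of length exceeding $1$.'' No such point or segment is ever produced: you do not say which point of $S$ lands in the interior of which convex hull, nor which collinear triple of points of $S$ yields the over-long segment. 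Since the entire geometric content of (i) lives in precisely this step, the proposal is a strategy outline rather than a proof. There is also a structural risk in the ``sliding'' idea: any argument that moves $b$ continuously while ``maintaining $\|b-c\|=1$'' implicitly uses well-definedness and monotonicity of the chord map $z\mapsto f(z)$, but in the paper that monotonicity is \emph{deduced from} Lemma \ref{basic_lem}(i) (in the proof of continuity of $f$), so grounding (i) on it would be circular unless you establish those properties independently.

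For comparison, the paper's proof of (i) supplies exactly the concrete objects your sketch hopes for. First, for a unit chord $[u,v]$ of $S$ the URTC property pins down the two solutions of the system $\|u-x\|=\|v-x\|=1$ as $0$ and $u+v$, which gives $\arc^\circ(u,v)\subseteq\conv(u,v,u+v)$ with the arc avoiding the sides $[u,u+v]$ and $[v,u+v]$. Then, splitting into cases according to whether the Euclidean distance from $b$ to $\ell(a,d)$ is at most or at least that of $c$, it proves the specific containment $c-b\in\conv(0,d-a,d)$; since $c-b\in S$ and the only points of $S$ in that triangle lie on the chord $[d-a,d]$, this forces $c-b\in[d-a,d]$. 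From this one extracts $b=a$ together with $d\in[d-a,c]$, so that $d-a,d,c$ are collinear points of $S$ and hence $[d-a,c]\subseteq S$; if $c\neq d$ this segment has endpoints at distance greater than $1$, contradicting the characterization of URTC-norms in Lemma \ref{URTC_lem}. Producing the containment $c-b\in[d-a,d]$ and the segment $[d-a,c]\subseteq S$ is the ``genuine work'' your proposal defers, so as it stands part (i) is not proven.
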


\begin{proof}
(i): Clearly we have $\arc^\circ(a,d) \subseteq \conv(a,d,a+d)$, and by the URTC property $\arc^\circ(a,d)\cap([a,a+d]\cup[d,a+d]) = \emptyset$ is valid. Elementary observations show that if the Euclidean distance between $b$ and $\ell(a,d)$ is less than or equal to the distance of $c$ and $\ell(a,d)$, then $c-b\in\conv(0,d-a,d)$ holds. This implies $c-b\in[d-a,d]$. Thus we obtain $b = a$ and $d\in [d-a,c]$, and hence $[d-a,c]\subseteq S$ is satisfied. Since our norm is URTC, we obtain $c=d$. The other case can be shown similarly.

(ii): We consider the continuous function $F\colon \beta\cdot S +a\to\R,\; F(z) = \|b-z\|$. Since we have $F\left(a+\frac{\beta}{\gamma}(b-a)\right) = |\beta-\gamma| \leq \alpha$ and $F\left(a-\frac{\beta}{\gamma}(b-a)\right) = \beta+\gamma \geq \alpha$, we conclude the existence of a point $c\in\R^2$ such that $\|a-c\| = \beta$ and $\|b-c\| = \alpha$ holds.
\end{proof}

We define the functions $f,g\colon S\to S, z\mapsto f(z)$ such that $\|z-f(z)\| = 1$, $\|z-g(z)\| = 1$, and $(0,z,f(z)), (0,g(z),z)$ are positively oriented. By the URTC property, $f$ and $g$ are well-defined, moreover, we clearly have $g^{-1} = f$, and hence $f$ and $g$ are bijective. We proceed with showing that $f$ is continuous.

\begin{lem}
The function $f$ is continuous.
\end{lem}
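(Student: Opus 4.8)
The plan is to prove sequential continuity, exploiting the compactness of $S$ together with the URTC property, which pins any subsequential limit down to one of only two candidates. Throughout, for $p,q\in\R^2$ let $\det(p,q)$ denote the determinant of the matrix whose columns are $p$ and $q$, so that a triple $(0,p,q)$ is positively oriented exactly when $\det(p,q)>0$. First I would fix $z\in S$ and a sequence $z_n\to z$ in $S$, and aim to show $f(z_n)\to f(z)$; by the subsequence principle it suffices to check that every accumulation point of $(f(z_n))$ equals $f(z)$.

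Since $S$ is compact, an arbitrary subsequence of $(f(z_n))$ admits a further subsequence $f(z_{n_k})\to w$ for some $w\in S$. Continuity of the norm yields $\|z-w\| = \lim_k\|z_{n_k}-f(z_{n_k})\| = 1$. Applying the URTC property with $a=0$ and $b=z$ (recall $\|0-z\|=1$), the points of $S$ lying at unit distance from $z$ are exactly the two solutions of \eqref{URTCdef_eq}, namely $f(z)$ and $g(z)$; hence $w\in\{f(z),g(z)\}$.

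The decisive step is to rule out $w=g(z)$ by an orientation argument. For every $u\in S$ the points $0,u,f(u)$ are non-collinear, since otherwise $f(u)=\pm u$ would force $\|u-f(u)\|\in\{0,2\}$, contradicting $\|u-f(u)\|=1$; thus $\det(u,f(u))>0$ strictly. Passing to the limit along the subsequence gives $\det(z,w)=\lim_k\det(z_{n_k},f(z_{n_k}))\geq 0$. On the other hand $0,z,g(z)$ are non-collinear by the same reasoning, and the positive orientation of $(0,g(z),z)$ means $\det(z,g(z))<0$. Therefore $w=g(z)$ is impossible, so $w=f(z)$.

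Since every accumulation point of $(f(z_n))$ equals $f(z)$ and $S$ is (sequentially) compact, the full sequence converges, $f(z_n)\to f(z)$, proving continuity. I expect the main obstacle to be the careful handling of orientation in the limit: the inequality obtained from the limit is only weak ($\det(z,w)\geq 0$), so one must secure the strict negativity $\det(z,g(z))<0$ to derive the contradiction, which is precisely why the non-collinearity of $0,z,g(z)$ (and of $0,u,f(u)$) has to be established first.
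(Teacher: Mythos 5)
Your proof is correct, but it takes a genuinely different route from the paper. The paper argues indirectly: it assumes a point of discontinuity $b_1$ exists, derives from Lemma \ref{basic_lem}~(i) a monotonicity property of $f$ as a circle map ($z\in\arc^\circ(z_0,f(z_0))$ implies $f(z)\in\arc^\circ(f(z_0),f(f(z_0)))$), and concludes that a jump at $b_1$ would force $f(S)$ to miss an open arc adjacent to $f(b_1)$, contradicting the bijectivity of $f$. You instead argue directly by sequential compactness of $S$: every accumulation point $w$ of $(f(z_n))$ solves the URTC system for the pair $(0,z)$, so $w\in\{f(z),g(z)\}$, and the sign of $\det(z,\cdot)$ --- which is $>0$ on all the terms $f(z_{n_k})$, hence $\geq 0$ in the limit, but strictly $<0$ at $g(z)$ --- excludes $g(z)$. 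Your handling of the one delicate point is right: the weak inequality in the limit must be played against the \emph{strict} inequality at $g(z)$, which you secure by the non-collinearity of $0,z,g(z)$. What your approach buys is self-containedness: it bypasses both the monotonicity property (and hence the rather delicate convexity argument of Lemma \ref{basic_lem}~(i)) and the bijectivity of $f$, using only the exact-two-solutions count and the separation of the two solutions by the line $\ell(0,z)$, i.e.\ the well-definedness of $f$ and $g$. The paper's argument, in exchange, exhibits the order-preserving circle-map structure of $f$, which is of independent geometric interest.
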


\begin{proof}
We assume indirectly that $f$ is not continuous, and without loss of generality we may suppose that $b_1 \in S$ is a point of discontinuity. We set $b_0 = g(b_1)$ and $b_2 = f(b_1)$. 

A quite straightforward application of Lemma \ref{basic_lem} gives the following monotonicity property of $f$: if $z\in\arc^\circ(z_0,f(z_0))$, then we have $f(z)\in\arc^\circ(f(z_0),f(f(z_0)))$. By this monotonicity property we obtain that there are two points $b_{2-\varepsilon} \in \arc^\circ(b_1,b_2)$ and $b_{2+\varepsilon} \in \arc^\circ(b_2,-b_0)$ such that either $f(S)\cap\arc^{\circ}(b_{2-\varepsilon},b_2) = \emptyset$, or $f(S)\cap\arc^{\circ}(b_2,b_{2+\varepsilon}) = \emptyset$. Both of them contradicts to the bijectivity of $f$.
\end{proof}

Let $d>0$. We call the 7-tupple $(a,b_1,b_2,b_3,c_1,c_2,c_3)\in(\R^2)^7$ a $d$-probe if 
\begin{equation}\label{dprobe_eq}
\begin{gathered}
d = \|a-b_1\| = \|a-b_2\| = \|b_1-b_2\| = \|b_1-b_3\| = \|b_2-b_3\|\\
= \|a-c_1\| = \|a-c_2\| = \|c_1-c_2\| = \|c_1-c_3\| = \|c_2-c_3\| = \|b_3-c_3\|
\end{gathered}
\end{equation}
holds (\cite{Ra2,Ju}, see also Figure 1). By the following lemma, any three points which are in a regular $d$-position can be extended to a $d$-probe.

\begin{figure}[h]
\centering
\includegraphics[bb=0 0 495 423,scale=0.35]{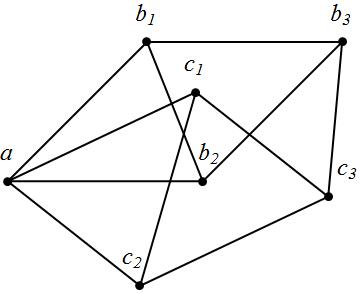}
\caption{A $d$-probe. An edge represents that the distance between the endpoints is exactly $d$.}
\end{figure}

\begin{lem}\label{probe_lem}
Let $\|\cdot\|$ be a URTC-norm, and let $b_1,b_2\in d\cdot S$ such that $\|b_1-b_2\| = d$. Then the 3-tupple $(0,b_1,b_2)$ can be extended to a $d$-probe $(0,b_1,b_2,b_3,c_1,c_2,c_3)$, where we necessarily have $b_3 = b_1+b_2$. Moreover, $0\neq b_3$ holds.
\end{lem}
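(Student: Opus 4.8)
The plan is to build the probe in two stages, pinning down $b_3$ first and then producing $c_1,c_2,c_3$ by a continuity argument, so let me begin with $b_3$. The defining relations \eqref{dprobe_eq} force $\|b_1-b_3\|=\|b_2-b_3\|=d$, and since $\|b_1-b_2\|=d$ I would apply the (rescaled) URTC property to the pair $b_1,b_2$: the system has exactly two solutions, and by the remark after Definition 1 they are $x$ and $b_1+b_2-x$. One solution is $a=0$ (because $\|b_1\|=\|b_2\|=d$), so the two solutions are $0$ and $b_1+b_2$; as the probe vertex $b_3$ must differ from $a$, necessarily $b_3=b_1+b_2$. To see $b_3\neq 0$ I would use the reverse triangle inequality, $\|b_1+b_2\|=\|2b_1-(b_1-b_2)\|\geq 2\|b_1\|-\|b_1-b_2\|=2d-d=d>0$. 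With this choice the relations $\|b_1-b_3\|=\|b_2\|=d$ and $\|b_2-b_3\|=\|b_1\|=d$ hold automatically.

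For the second fan I would introduce the scaled vertex maps $f_d,g_d\colon d\cdot S\to d\cdot S$, $f_d(w)=d\,f(w/d)$ and similarly for $g$, which are continuous (by the continuity of $f$ together with compactness of $S$) and odd, i.e. $f_d(-w)=-f_d(w)$, since central symmetry preserves orientation and the norm is symmetric. Since $b_2$ lies on $d\cdot S$ at distance $d$ from $b_1$, it equals one of $f_d(b_1),g_d(b_1)$; let $\psi$ denote whichever of the two maps sends $b_1$ to $b_2$, so that $\psi$ is continuous and odd. For each $c_1\in d\cdot S$ I set $c_2=\psi(c_1)$ and $c_3=c_1+c_2$; then $(0,c_1,c_2)$ is a regular $d$-triangle and, exactly as above, $\|c_1-c_3\|=\|c_2\|=d$ and $\|c_2-c_3\|=\|c_1\|=d$. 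Thus the single relation left to arrange is $\|b_3-c_3\|=d$.

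I would finish with an intermediate value step. Consider the continuous function $h\colon d\cdot S\to\R$ given by $h(c_1)=\|b_3-(c_1+\psi(c_1))\|$. At $c_1=b_1$ we get $c_3=b_1+b_2=b_3$, so $h(b_1)=0$; at the antipode $c_1=-b_1$, oddness gives $c_3=-b_1-\psi(b_1)=-(b_1+b_2)=-b_3$, so $h(-b_1)=\|2b_3\|=2\|b_3\|\geq 2d>d$. Since $d\cdot S$ is connected and $h$ continuous, the intermediate value theorem yields a point $c_1^{\ast}$ with $h(c_1^{\ast})=d$; taking $c_2=\psi(c_1^{\ast})$ and $c_3=c_1^{\ast}+c_2$ supplies the remaining three points and completes a $d$-probe.

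The conceptual heart is this last intermediate value argument: as the regular triangle $(0,c_1,c_2)$ rotates once around the origin, its reflected vertex $c_3=c_1+c_2$ sweeps a closed curve passing through both $b_3$ and $-b_3$, and so must meet the circle $b_3+d\cdot S$. The steps I expect to require the most care are the continuity and oddness of $\psi$ (resting on the already established continuity of $f$ and on the orientation behaviour under $w\mapsto -w$) and the two clean endpoint evaluations $h(b_1)=0$ and $h(-b_1)=2\|b_3\|\geq 2d$ that bracket the target value $d$; the inequality $\|b_1+b_2\|\geq d$ is precisely what guarantees the bracket is nondegenerate. Everything else is routine verification of the distance relations in \eqref{dprobe_eq}.
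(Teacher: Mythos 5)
Your construction of the $c$-fan is sound and is essentially the paper's own argument: the paper applies the intermediate value theorem to $z\mapsto\bigl\|\tfrac12(z+f(z))-\tfrac12(b_1+b_2)\bigr\|$ with target value $d/2$, which is your $h$ up to a factor of $2$, and it uses the same endpoint evaluations at $b_1$ and $-b_1$ (including the implicit oddness of $f$, which you rightly flag as needing justification, and the bound $\|b_1+b_2\|\geq d$).

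There is, however, a genuine gap in your treatment of the uniqueness claim ``necessarily $b_3=b_1+b_2$.'' You write that ``the probe vertex $b_3$ must differ from $a$,'' but the definition of a $d$-probe is purely a list of distance equations \eqref{dprobe_eq}; it imposes no distinctness on the seven points, and nothing in the first line of \eqref{dprobe_eq} relates $b_3$ to $a$ at all. So the URTC property only gives $b_3\in\{0,\,b_1+b_2\}$, and excluding $b_3=0$ is precisely the nontrivial content of the ``necessarily'' part --- it is also exactly what the main theorem's proof needs, since there one must rule out $\phi(b_3)=\phi(b_0)$ for the \emph{image} probe, about which no distinctness is known a priori. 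The paper closes this case using the one remaining equation $\|b_3-c_3\|=d$: if $b_3=0$ then $\|c_3\|=d\neq0$, so by URTC applied to the pair $c_1,c_2$ one gets $c_3=c_1+c_2$; but then $c_1-c_2$, $c_1$, $c_1+c_2$ are three collinear points of $d\cdot S$, whence $[c_1-c_2,c_1+c_2]\subseteq d\cdot S$ is a segment whose endpoints are at distance $2d>d$, contradicting the characterization of URTC-norms in Lemma \ref{URTC_lem}. Your proposal needs this (or an equivalent) argument; as written, the uniqueness statement is asserted rather than proved. The final claim $b_3\neq0$ is then immediate from your correct estimate $\|b_1+b_2\|\geq d>0$.
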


\begin{proof}
We can assume without loss of generality that $(0,b_1,b_2)$ is positively oriented. Let us define
\[
h\colon d\cdot S\to\R, \quad z \mapsto \left\|\frac{1}{2}(z+f(z)) - \frac{1}{2}(b_1+b_2)\right\|,
\]
which is trivially continuous and $h(b_1) = 0$. Moreover, by the triangle inequality we obtain
\[
h(-b_1) = \left\|\frac{1}{2}(-b_1-b_2) - \frac{1}{2}(b_1+b_2)\right\| = \|b_1 + b_2\| 
\]
\[
= \|2b_2 - (b_2-b_1)\| \geq \|2b_2\| - \|(b_2-b_1)\| = d,
\]
which immediately implies the existence of a $c_1 \in d\cdot S$ such that $h(c_1) = \frac{d}{2}$. We define $b_3 = b_1 + b_2$, $c_2 = f(c_1)$ and $c_3 = c_1 + c_2$. The ordered 7-tupple $(0,b_1,b_2,b_3,c_1,c_2,c_3)$ is trivially a $d$-probe.

In the above construction we chose $b_3$ to be $b_1+b_2$. Now, we show that this is the only choice. Since the norm is URTC, the first line of \eqref{dprobe_eq} (with $a = 0$) implies $b_3\in\{0,b_1+b_2\}$. Suppose that $0 = b_3$ happens. Since $\|b_3-c_3\| = d$, we obtain $c_3 = c_1+c_2$. But from $c_1-c_2,c_1,c_1+c_2\in d\cdot S$ we conclude $[c_1-c_2,c_1+c_2]\subseteq d\cdot S$, which clearly contradicts the URTC property. Therefore we indeed have $b_3 = b_1+b_2$.
\end{proof}

Now, we are in the position to present the proof of the main result of this paper.

\begin{proof}[Proof of Theorem \ref{main_thm}]
Suppose that $\phi$ preserves distance $d > 0$. Let $b_0,b_1,b_2\in \R^2$ be arbitrary three points which are in a regular $d$-position. Let us consider a $d$-probe $(b_0,b_1,b_2,b_3,c_1,c_2,c_3)\in(\R^2)^7$ which exists by Lemma \ref{probe_lem} and where $b_3 = b_1+b_2-b_0$. Clearly, the 7-tupple $(\phi(b_0),\phi(b_1),\phi(b_2),\phi(b_3),$ $\phi(c_1),\phi(c_2),\phi(c_3))\in(\R^2)^7$ is a $d$-probe as well, and therefore $\phi(b_3) = \phi(b_1+b_2-b_0) = \phi(b_1) + \phi(b_2) - \phi(b_0)$ is satisfied. Set $b_4 = 2b_2-b_0$. Considering $b_1,b_2,b_3$ instead of $b_0,b_1,b_2$, by the previous observations we conclude $\phi(b_4) = \phi(2b_2-b_0) = 2\phi(b_2) - \phi(b_0)$. This immediately implies that distance $2d$ is also preserved, moreover, when $b_0,b_2,b_4$ are collinear such that $d = \|b_0-b_2\| = \|b_2-b_4\|$ and $2d = \|b_0-b_4\|$, then the same hold for their images.

\begin{figure}[h]
\centering
\includegraphics[bb=0 0 360 124,scale=0.5]{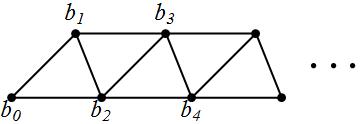}
\caption{}
\end{figure}

Iterating the above method we can easily prove also the following statement: distance $nd$ is also preserved for every $n\in\N$, furthermore, when $a,b,c\in\R^2$ are collinear such that $d = \|a-b\|, (n-1)d = \|b-c\|$ and $nd = \|a-c\|$, then the same is valid for their images. In particular, $\phi$ preserves distance $n$ for all $n\in\N$ (see Figure 2).

\begin{figure}[h]
\centering
\includegraphics[bb=0 0 360 159,scale=0.5]{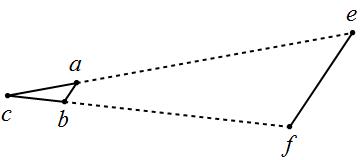}
\caption{}
\end{figure}

Next, we show that assuming distance $d$ is preserved implies that distance $\frac{d}{n}$ is preserved as well for every $n\in\N$. Let $a,b\in \R^2, \|a-b\| = \frac{d}{n}$. By Lemma \ref{basic_lem}, there exists a point $c\in\R^2$ such that $\|a-c\| = \|b-c\| = d$. Set $e = c + n(a-c)$ and $f = c + n(b-c)$. Obviously, we have $\|a-e\| = \|b-f\| = (n-1)d$ and $\|e-f\| = d$. Therefore we have $d = \|\phi(a)-\phi(c)\| = \|\phi(b)-\phi(c)\| = \|\phi(e)-\phi(f)\|$, $(n-1)d = \|\phi(a)-\phi(e)\| = \|\phi(b)-\phi(f)\|$, moreover, $\phi(c),\phi(a),\phi(e)$ are collinear and $\phi(c),\phi(b),\phi(f)$ are collinear. This implies $\phi(a)-\phi(b) = \frac{1}{n}(\phi(e)-\phi(f))$, and thus $\|\phi(a)-\phi(b)\| = \frac{1}{n}\|\phi(e)-\phi(f)\| = \frac{d}{n}$. (See Figure 3).

By the above observations, we immediately obtain that $\phi$ preserves all rational distances. Let $a,b\in\R^2$ be two arbitrary different points. For every $0 < \varepsilon < \frac{\|a-b\|}{3}$ we can find $p,q\in\Q$ such that $0 < q < \varepsilon$ and $p-q < \|a-b\| < p+q$. By Lemma \ref{basic_lem} we can find such a point $c\in\R^2$ which satisfies $\|a-c\| = p, \|b-c\| = q$. Since rational distances are preserved by $\phi$, we get $\|\phi(a)-\phi(c)\| = p, \|\phi(b)-\phi(c)\| = q$, and by the triangle inequality
\[ p-\varepsilon < p-q \leq \|\phi(a)-\phi(b)\| \leq p+q <p+\varepsilon. \]
Since this holds for every $0 < \varepsilon < \frac{\|a-b\|}{3}$, we conclude $\|\phi(a)-\phi(b)\| = \|a-b\|$, which means that $\phi$ is indeed an isometry.

Since isometries are continuous, affinity of $\phi$ follows from the preservations of midpoints which was pointed out before. This completes the proof.
\end{proof}

Several norms on $\R^2$ do not have the URTC property. It is not clear what the answer is for the Aleksandrov conservative distance problem for these norms. Those techniques which were presented here do not work for these class of norms. We left this question as a challenging open problem.

%---------------------------------------------------------------------------------------------------------------------------------------------------------------------

\section*{Acknowledgements.}
The author was supported by the "Lend\" ulet" Program (LP2012-46/2012) of the Hungarian Academy of Sciences.

\end{document}